\newtheorem{thm}{Theorem}
\newtheorem{cor}[thm]{Corollary}
\newtheorem{prop}[thm]{Proposition}
\newtheorem{rem}[thm]{Remark}
\newtheorem{lem}[thm]{Lemma}
\def\grdeg{\deg^{\operatorname{gr}}}
\newtheorem{thmABC}{Theorem}
\newtheorem{ques}[thmABC]{Question}
\newcommand\trdeg{{\operatorname{trdeg}}}
\newcommand\ideal[1]{{\langle{#1}\rangle}}
\newcommand\sg[1]{{\ideal{#1}}}
\newcommand\lcm{{\operatorname{lcm}}}
\newcommand\Tref[1]{{Theorem~\ref{#1}}}
\newcommand\Pref[1]{{Proposition~\ref{#1}}}
\newcommand\ra{{\rightarrow}}
\newcommand\eq[1]{{(\ref{#1})}}
\newcommand\M[1][n]{{\operatorname{M}_{#1}}}
\newcommand\diag[1]{{\operatorname{diag}(#1)}}
\newcommand\Z{{\mathbb{Z}}}
\long\def\forget#1\forgotten{{}}
\title[Makar-Limanov's problem]{Makar-Limanov's problem on values of polynomials on matrices}
\author{Louis H. Rowen}
\author{Uzi Vishne}
\address{Department of Mathematics, Bar-Ilan University, Ramat-Gan 52900, Israel}
\keywords{Freiheitsatz, polynomial evaulation, identity, homogeneous, matrices, multilinear}
\subjclass[2010]{Primary: 08B20;
 Secondary:16R20}
\begin{document}

\thanks{$^*$The   authors thank Lenny Makar-Limanov for raising this question, pointing out the connection to the Freiheitsatz, and other helpful conversations.}

\thanks{$^*$ The authors thank Be'eri Greenfeld for helpful comments}
\thanks{$^*$The authors were supported by the ISF grant 1994/20. The first author was supported by the Anshel Pfeffer Chair}

\begin{abstract}
Suppose $F$ is an infinite field and let $f \in F\sg{X_1, \dots,X_m}$ be a noncommutative polynomial. Partially answering a query of Makar-Limanov, we show that there  are numbers $d$ and $m'$ such that, if $F$ is closed under taking $d$th roots, for any $n \ge  m'$ there are matrices $A_1,\dots,A_m$ in~$M_n(F)$ such that $f(A_1,\dots,A_m)$ is upper triangular with $n-m'$ prescribed diagonal entries.

When f is homogeneous, $f(A_1,\dots,A_m)$ is diagonal with $n-m'$ prescribed diagonal entries.

When f is multilinear, we can take $d=1$ and $m' = [\frac{m-1}{2}]$,  and the upper left  $(n-m')\times (n-m')$ piece of $f(A_1,\dots,A_m)$ can be taken to be $\diag{\beta_1,\dots, \beta_{n-m'}}$, for indeterminates $\beta_i$.

Furthermore, if $f$ is not a polynomial identity of $ k \times k $ matrices, then at least $ n - k $ characteristic values of $ f(A_1,\dots,A_m) $ may be taken to be algebraically independent.
\end{abstract}

\maketitle

\section{Introduction}
Let  $F\sg{X_1, \dots,X_m}$ be the free associative algebra in  noncommuting indeterminates over an arbitrary field $F$. Let $f(\M[n](F))$ denote the set of all evaluations $f(A_1,\dots,A_m)$ for $A_1,\dots,A_m \in \M[n](F)$.
Makar-Limanov raised the question:

\begin{ques}\label{originalQ}
Let $f \in F\sg{X_1, \dots,X_m}$ be a polynomial, where $F$ is algebraically closed. Is the minimal rank of evaluations in  $f(\M[n](F))$ bounded when $n \ra \infty$?
\end{ques}

 \begin{rem}
      As Makar-Limanov pointed out, a positive answer would provide a characteristic-free Freiheitssatz for associative algebras, that each nonconstant element $f$ of the free associative algebra generates a proper ideal. (This is   known so far only in characteristic 0, with a more complicated argument using algebraically closed skew fields, in \cite{ML}.)

      Indeed, were the ideal $\ideal{f}$ not proper, there would be polynomials $ p_i, q_i $, $1 \leq i \leq \ell$ (for suitable $\ell$) such that $1 = \sum_{i} p_i f q_i$. But if   Question~\ref{originalQ} has a positive answer, then there would be evaluations in $\M[n](F)$  for which the identity matrix
$$I_{n} = \sum_{i=1}^{\ell} p_i(A_1, \dots ,A_m) f(A_1, \dots, A_m) q_i(A_1, \dots, A_m)$$
 has rank $\leq C\ell$ for a suitable constant $C$, a contradiction when $n$ is large.
 \end{rem}

In this paper, we address the following stronger question:

\begin{ques}\label{T3}
Let $f \in F\sg{X_1, \dots,X_m}$. Is there a number $m'$ such that, for any~$n$ there is an evaluation $A \in f(\M[n](F))$ such that $A$ is diagonal with prescribed entries except for possibly the final $m'$ entries?
\end{ques}

We give a partial answer, proving that for suitable $d$ (only depending on the degrees of monomials in $f$), if $F$ is closed under taking  roots of polynomials of degree $d$, then $f(\M[n](F))$ has upper triangular evaluations whose diagonal entries (and thus whose characteristic values) are $\beta_i$ for $1\le i \le n-m$.

Also we answer  Question~\ref{T3} when  $f$ is completely homogeneous.
When  $f$ is multilinear, we can take $d=1$ and $m' = [\frac{m-1}{2}],$
and then the $\beta_i$ can be taken to be indeterminates over $F,$  and the upper left  $(n-m')\times (n-m')$ piece of $f(A_1,\dots,A_m)$ can be taken to be $\diag{\beta_1,\dots, \beta_{n-m'}}$.

\def\ra{{\rightarrow}}
\def\Q{{\mathbb{Q}}}

\subsection{Statements of results}$ $

We say that $f$ is {\bf{very rich}} if $f(\M[n](F))$ contains   diagonal matrices with an arbitrary diagonal. Similarly, for a fixed number $m'$, we say that $f$ is~{\bf{very $m'$-rich}} if for any $n$, $f(\M[n](F))$ contains   diagonal matrices with an arbitrary diagonal, after some adjustment of the last $m'$ entries along the diagonal. Conjugating by a permutation matrix, this condition is equivalent to stating that every diagonal matrix appears in $f(\M[n](F))$ after adjusting  $m'$ entries in any fixed position.

\begin{thm}\label{T3.multi}
Over any field $F$, every multilinear polynomial $f(X_1,\dots,X_m)$ is very $(m-1)$-rich.
\end{thm}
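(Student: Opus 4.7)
My plan is to find explicit matrices $A_1, \ldots, A_m \in \M[n](F)$ realizing the required diagonal, via a single baseline substitution that I will then strengthen in degenerate cases.

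For the baseline, let $A_1 = D = \diag{a_1, \ldots, a_n}$ with parameters $a_j \in F$ to be chosen, and set $A_2 = \cdots = A_m = P$, where $P$ is the permutation matrix of a single $(m-1)$-cycle on $\{1, \ldots, m-1\}$ fixing $\{m, \ldots, n\}$, so that $P^{m-1} = I$. Writing $f = \sum_{\sigma \in S_m} \alpha_\sigma X_{\sigma(1)} \cdots X_{\sigma(m)}$: for each $\sigma$, set $k = \sigma^{-1}(1)$; then the monomial $A_{\sigma(1)} \cdots A_{\sigma(m)}$ equals $P^{k-1} D P^{m-k} = P^{k-1} D P^{-(k-1)}$, which is diagonal. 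Summing, $f(A_1, \ldots, A_m)$ is diagonal with $j$-th entry $\sum_{k=1}^m \beta_k \, a_{P^{1-k}(j)}$, where $\beta_k = \sum_{\sigma:\sigma(k)=1} \alpha_\sigma$. For $j$ a fixed point of $P$ (i.e., $j \ge m$) this collapses to $c \cdot a_j$ with $c = f(1,\ldots,1) = \sum_\sigma \alpha_\sigma$. When $c \ne 0$, setting $a_j = \beta_j/c$ for $j \ge m$ prescribes $n-(m-1)$ diagonal entries; the remaining $m-1$ entries (coming from the nontrivial orbit of $P$) are absorbed as the allowable adjustment, establishing $(m-1)$-richness in this case.

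The main obstacle is the case $c = 0$, where all fixed-point entries of the baseline vanish. My first fallback is dimension reduction: substituting $X_i = I$ in $f$ yields a multilinear polynomial $g_i$ in $m-1$ variables whose image lies inside that of $f$. If some $g_i \not\equiv 0$, then by induction on $m$ the polynomial $g_i$ is $(m-2)$-rich, hence so is $f$, which is strictly stronger than required. The hardest subcase is when $f$ is \emph{proper}, i.e., every substitution $X_i \to I$ yields $0$; for $m = 2$ this forces $f \in F \cdot [X_1, X_2]$, and Shoda's theorem (every trace-zero matrix is a commutator, valid over any field) supplies $n - 1 \ge n - (m-1)$ freely prescribable entries. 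For proper $f$ of higher degree, I would augment the baseline by replacing one of $A_2, \ldots, A_m$ with $P + yE$ for a carefully chosen rank-one off-diagonal $E$ and scalar $y$, then analyze the first-order contribution in $y$; this contribution is a commutator-type expression in $D$ and $E$, and the key technical step is to show that the resulting linear map on the diagonal subspace has rank at least $n - m + 1$ onto the prescribed coordinates, \emph{uniformly} for all proper $f$. Establishing this uniformity is the delicate step, and I expect it will require either an invariant-theoretic argument or an explicit analysis of the Lie-polynomial structure of $f$.
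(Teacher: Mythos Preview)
Your baseline and the reduction via $X_i \mapsto I$ are correct, and the $m=2$ proper case is handled by the Albert--Muckenhoupt theorem (Shoda's result extended to arbitrary fields). But there is a genuine gap for proper multilinear $f$ with $m \geq 3$. This class is substantial --- every Lie polynomial is proper, for instance $[[X_1,X_2],X_3]$ --- and for such $f$ your baseline gives $0$ at every fixed point of $P$, so nothing can be prescribed there. Your remedy is to replace one copy of $P$ by $P + yE$ and study the linear-in-$y$ contribution, but you yourself flag that showing the resulting linear map on the diagonal has rank at least $n-m+1$ \emph{uniformly in $f$} is ``the delicate step'' requiring further structural input. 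No such argument is supplied, and it is not clear one exists at this level of generality; the first-order contribution is a particular linear combination (with coefficients built from the $\alpha_\sigma$) of products of $D$, powers of $P$, and a single $E$, and for specific proper $f$ these can conspire to have small rank on the diagonal. As written, the proposal proves the theorem only when iterated specializations $X_i \to I$ eventually reach a polynomial with nonzero coefficient sum, or when the residual proper polynomial has at most two variables.

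The paper's argument avoids this case analysis by a different substitution. It grades $\M[n](F)$ by $\grdeg(e_{ij}) = i - j$ and takes each $x_i$ homogeneous: $x_i = \sum_j \mu_{i,j}\,e_{j,j+1}$ for $i < m$ and $x_m = \sum_j \nu_j\,e_{j+m-1,\,j}$, so the graded degrees sum to $0$ and $f(x_1,\ldots,x_m)$ is automatically diagonal. The $(j,j)$ entry involves only $\nu_{j'}$ with $j' \leq j$, and $\nu_j$ itself appears (from permutations $\sigma$ with $\sigma(m)=m$) with a coefficient that is a nonzero polynomial in the $\mu_{i,j'}$ --- nonzero because, after normalizing the coefficient of $X_1\cdots X_m$ to $1$, the contributing $\mu$-monomials are pairwise distinct. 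One then solves this triangular system for $\nu_1,\ldots,\nu_{n-m+1}$ in order, over any field, with no hypothesis on $f(1,\ldots,1)$ and no appeal to commutator theorems.
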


\begin{thm}\label{T3a}
Let $f(X_1,\dots,X_m)$ be a completely homogeneous polynomial, with $d_i = \deg_{X_i}f$. For some $i_0$, take $m' = \lcm(d_{i_0},\sum d_j -  d_{i_0})$ and suitable $d \le d_{i_0}$ (as specified in the proof). If $F$ is closed under taking $d$ roots, then~$f$ is very $m'$-rich.
\end{thm}

Settling for a weaker condition, we say that $f$ is {\bf{rich}} if $f(\M[n](F))$ contains triangular matrices with an arbitrary diagonal; and {\bf{$m'$-rich}} if for any $n$, $f(\M[n](F))$ contains
triangular matrices with an arbitrary diagonal, after some adjustment of the last
$m'$ entries along the diagonal.

\begin{thm}\label{T3b}
For $d$ and $m'$ to be described in the proof, if $F$ is closed under taking roots of polynomials of degree~$d$,  then any~$f$ is $m'$-rich.
\end{thm}

The proofs, which are completely elementary, are given in \S\ref{se1}. They involve one particular kind of substitution of matrix units, which we call \textbf{standard}.

\begin{cor}
Notation as in Question~\ref{T3}, The upper  $(n-m')\times (n-m')$ part of  $f$ evaluated on $M_n(F)$ takes on the value of any  $(n-m')\times (n-m')$ matrix $A$ having distinct eigenvalues ${\beta_1}, \dots,{\beta_{n-m'}} $.

When f is completely homogeneous of degree $d_i$ in the $i$-th indeterminate, we can fix any $i_0$ and take $d = d_{i_0}$ and $m' = \operatorname{lcm}(d_{i_0},\sum d_j - d_{i_0})$.

\end{cor}
\begin{proof}
  The  matrix $A$ is diagonalizable to $\diag{\beta_1}, \dots,\diag{\beta_{n-m'}} $. On the other hand $f$ has a triangular evaluation $f(x_1,\dots, x_m)$ with eigenvalues ${\beta_1}, \dots,{\beta_{n-m'}} $, so we can conjugate the upper $(n-m')\times (n-m')$ portion of $f(x_1,\dots, x_m)$ to $A$.

\end{proof}

We also obtain  a better bound   but for a  weaker result:

\begin{prop}\label{PropA}
Suppose $ f(X_1,\dots, X_m) \in F\sg{X_1,\dots, X_m}$ is a noncommutative polynomial, 
 which is  noncentral on $k \times k$ matrices  over an algebraically closed field $F$. Then the set of characteristic values $ \alpha_1,\dots,\alpha_n$ of $f(Y_1,\dots, Y_m)$, the evaluation of $f$ on generic matrices, has transcendence degree $\ge n - k$ (over~$F$), for each $n \ge k$.
\end{prop}
\section{Proofs of the theorems}\label{se1}

We  take commuting indeterminates $\mu_{i,j}$ ($i=1,\dots,m$, $j = 1,\dots,n)$, and work over the field $\hat F = F(\mu_{i,j})$, where we shall specialize the $\mu_{i,j}$ to values in~$F$ as described below.
Recall that the matrix algebra $\M[n](F)$ is spanned by matrix units $e_{i,j},1 \le i,j\le n,$ and is $\Z$-graded by assigning $\grdeg(e_{i,j}) = i-j$.
The homogeneous components are $M_\delta = \sum_j F e_{j,j+\delta}$, and in particular $M_0$ is the space of diagonal matrices.
We make the additional restriction that the $X_1,\dots,X_m$ will be assigned to be gr-homogeneous matrices $x_1,\dots,x_m$ under this grading. We call this a \textbf{standard substitution}.

\begin{proof}[Proof of \Tref{T3.multi}]
We handle this multilinear case separately in order to get intuition, and also since the optimal result is rather clearcut. 

Suppose we fix $q_1,\dots,q_m \in \mathbb \Z$ with $\sum _{i=1}^{m} q_i  = 0$, and take the homogeneous substitution
\begin{equation}\label{setx}
x_i = \sum_j \mu_{i,j}e_{j,j+q_i}
\end{equation}
(summing over all $j$ for which $e_{j,j+q_i}$ is defined, i.e., $1\leq j,j+q_i\leq n$).
Then $f(x_1,\dots,x_m)$ is homogeneous, with $$\grdeg(f(x_1,\dots,x_m)) = \sum \grdeg(x_i) = \sum q_i  = 0,$$
so $f(x_1,\dots,x_m)$ is diagonal.
But what are the  entries of the evaluation?

Fixing a permutation $\pi \in S_m$ and an index $j \in [1,n]$, define $(j_1,\dots,j_m)$ by $j_1 = j$,\ $j_2 = j_1+q_{\pi 1}$,\ $j_3 = j_2+q_{\pi 2}$,\ ...,\ $j_m = j_{m-1}+q_{\pi m}$. Since $\sum q_i = 0$, the cycle closes with $j_m = j_1$.

We say that $\pi$ is {\bf{kosher}} at $j$ if all the entries in this vector are in the range~$[1,n]$, so that the matrix units $e_{j_i,j_{i+1}}$ are defined.  Let $C_j$ denote the set of $\pi \in S_n$ that are kosher at $j$.

Write $f(X_1,\dots,X_m) = \sum_{\pi \in S_m} \alpha_{\pi} X_{\pi 1}\cdots X_{\pi m}$, where $\alpha_{\pi} \in F$ and $\alpha_{(1)} = 1$. Along the diagonal, at the $j,j$ entry, we get
\begin{equation*}\begin{aligned}
f&(x_1,\dots,x_m)_{j,j}  =  \left(\sum_\pi \alpha_{\pi}  x_{\pi 1}\cdots x_{\pi m}\right) e_{j,j} \\
& =   \sum_{\pi} \alpha_{\pi}
\sum_{j_1} \mu_{\pi 1, j_1}e_{j_1,j_1+q_{\pi 1}}
\sum_{j_2} \mu_{\pi 1, j_2}e_{j_2,j_2+q_{\pi 1}}
\cdots
\sum_{j_m} \mu_{\pi 1, j_m}e_{j_m,j_m+q_{\pi 1}}
e_{j,j}.
\end{aligned}
\end{equation*}
In this sum, a nonzero product is obtained precisely when $\pi$ is kosher at $j$.
Therefore,
\begin{equation}\label{thefjj}
f_{j,j} = f(x_1,\dots,x_m)_{j,j} = \sum_{\pi \in C_j} \alpha_{\pi} \mu_{\pi 1, j_1}
\mu_{\pi 2, j_2}  \mu_{\pi 3, j_3} \cdots \mu_{\pi m, j_m}.
\end{equation}



We take
$(q_1,\dots,q_m) = (1,1,1,\dots,-m')$, where $m' = m-1$, so the matrices $x_1,\dots,x_m$ are well-defined by \eq{setx}. Note the exceptional role of $x_m$, and hence we denote the $\mu_{m,j}$ by $\nu_j$.


It is easy to see that every term in the evaluation \eq{thefjj} comes from a circuit of the matrix units, based in the vertex $j$, with each path corresponding to a distinct~$x_i$, $1\le i \le m$, where $m-1$ edges increase the index by $1$, and one edge completes the circuit by dropping the index by $m-1$. Indeed, there can only be one drop, which matches the other edges ascending by~$1$. In other words, for each $f_{j,j}$ (for $j\leq n+1-m$), 
we only have contributions which come from the unique circuit from~$j$ to~$j$ including the drop precisely once, in which $\nu_j$ occurs (in the final position). 
Other contributions would involve monomials in~$\nu_j$ and in the $\mu_{i,j'}$   
for $j'<j.$ By definition, different circuits correspond to different monomials.


Since $x_m$ has degree 1 in $f$, the coefficient of $\nu_j$ is a nonzero polynomial in the $\mu_{i,j}$ regardless of the $\alpha_{\pi}$, because the monomials are distinct (since they all correspond to different circuits) and $\alpha_1 = 1$. Thus for any $j_0$ we can solve $\mu_{mj_0}$ for any desired value $f(x_1,\dots,x_m)_{j_0,j_0}$, in rational expressions in the $\mu_{i,j}$ and the $\nu_j$ for $j<j_0,$
 and $\nu_{1},\dots,\nu_{j_0-1}$ can be evaluated inductively so that the $f(x_1,\dots,x_m)_{j,j}$, $j=1,\dots,n+1-m$, hit any desired targets.

To see this more explicitly, consider the transformation from the vector $(\nu_1,\dots,\nu_n)$ to $(f_{11},\dots,f_{nn})$, which is linear over the variables $\mu_{i,j}$ (for $i<m$). For $j = n$, the only kosher permutations start with a negative step $-m'$, so the only $\nu_j$ participating in the expression for $f_{n,n}$ is $\nu_n$. For $j=n-1$ a kosher permutation may start by either a negative step of $-m'$ or by a positive step of $+1$ followed by $-m'$, and consequently $f_{n-1,n-1}$ is a linear combination of $\nu_{nn}$ and $\nu_{n-1,n-1}$. In this manner we see that the transformation from $(\nu_j)$ to $(f_{jj})$ is represented by a triangular matrix (in terms of the $\mu_{ij}$), which can be solved for the last $n-m'$ variables.

This argument degenerates for  $m=1$, but then the assertion is obvious.
\begin{figure}
$$\xymatrix@C=34pt{
 1  \ar@/^2ex/@{->}[r]|{\mu_{11}}
 \ar@/^4ex/@{->}[r]|{\mu_{21}}  \ar@/^6ex/@{->}[r]|{\mu_{31}}
& 2 \ar@/^2ex/@{->}[r]|{\mu_{12}} \ar@/^4ex/@{->}[r]|{\mu_{22}}  \ar@/^6ex/@{->}[r]|{\mu_{32}}
& 3 \ar@/^2ex/@{->}[r]|{\mu_{13}} \ar@/^4ex/@{->}[r]|{\mu_{23}}  \ar@/^6ex/@{->}[r]|{\mu_{33}}
& 4 \ar@/^2ex/@{->}[r]|{\mu_{14}} \ar@/^4ex/@{->}[r]|{\mu_{24}}  \ar@/^6ex/@{->}[r]|{\mu_{34}} \ar@/^3ex/@{->}[lll]|{\mu_{41}}
& 5 \ar@/^2ex/@{->}[r]|{\mu_{15}} \ar@/^4ex/@{->}[r]|{\mu_{25}}  \ar@/^6ex/@{->}[r]|{\mu_{35}} \ar@/^3ex/@{->}[lll]|{\mu_{42}}
& 6 \ar@/^2ex/@{->}[r]|{\mu_{16}} \ar@/^4ex/@{->}[r]|{\mu_{26}}  \ar@/^6ex/@{->}[r]|{\mu_{36}} \ar@/^3ex/@{->}[lll]|{\mu_{43}}
& 7 \ar@/^2ex/@{->}[r]|{\mu_{17}} \ar@/^4ex/@{->}[r]|{\mu_{27}}   \ar@/^6ex/@{->}[r]|{\mu_{37}} \ar@/^3ex/@{->}[lll]|{\mu_{44}}
& 8 \ar@/^3ex/@{->}[lll]|{\mu_{45}} \ar@/^2ex/@{->}[r]|{\mu_{18}} \ar@/^4ex/@{->}[r]|{\mu_{28}}   \ar@/^6ex/@{->}[r]|{\mu_{38}} \
& 9 \ar@/^3ex/@{->}[lll]|{\mu_{46}}
}
$$
\caption{The matrices $x_1,\dots,x_n$ for $n=9$ and $m=4$}
\end{figure}
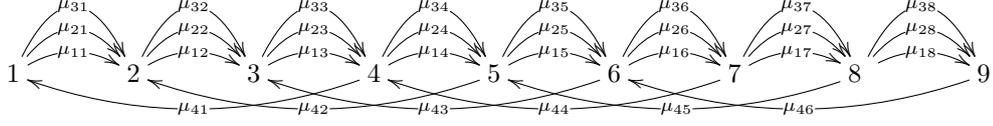
\end{proof}

\begin{rem}\label{prec} Before proceeding to the totally homogeneous case, let us review what was needed in the substitution in the multilinear case. The condition guaranteeing that $f(x_1,\dots,x_m)$   is diagonal is that the sum of the graded degrees of the $x_i$ is zero.

We also do not want our circuit to contain subcycles, so that it has no nontrivial automorphisms, in order that there be no cancellation in the coefficients of $\nu_j$, and choose this most efficiently, for $m' =m-1$ by \begin{equation}
    \label{m} x_i =  \sum \mu_{i,j} e_{j,j+q_i},
\end{equation}
where $q_i = 1$  and $q_{m'} = -m',$ and ``kosher'' implies only the matrix units $e_{i,j}: 1\le i,j \le n$ are used.
We get a circuit since $\sum q_i d_i = 0.$ Since we have graded the indices,  the increases in the indices of the matrix unit substitutions of the $x_i$ for $i<m$ match the decrease in the matrix units of $x_m$.

 An algebraic viewpoint is that we need to solve  a system of equations   whose coefficient matrix is in triangular form, and the product of the diagonal terms (nonzero polynomials in the $\mu_{i,j})$ is nonzero, and thus we can solve this system.
\end{rem}

\begin{proof}[Proof of \Tref{T3a}]
Let $f$ be a completely homogeneous polynomial. For ease of notation, we take $i_0 = m.$ We take the same approach as in the multilinear case,  Again we take the $x_i$ as in \eqref{m}, of graded degree $q_i,$ now with $q_i>0$ for $1\le i < m$, and  with $\sum_{i=1}^{m-1} q_id_i =  -q_md_m$ to guarantee that we   have diagonal values. We rearrange the substitution as
\begin{equation}
    \label{w1} \begin{aligned}
        x_i &= \sum _{k\ge 1} \mu_{i+(k-1)q_i , i + kq_i }e_{i+(k-1)q_i , i + kq_i }, \\ x_m &=\sum_{k\ge 0} \mu_{j+(k+1)d_m ,j+kd_m}e_{j+(k+1)d_m ,j+kd_m},
    \end{aligned}
\end{equation}
summed where the matrix units are defined. Now we   take $\nu_j$ to be the highest indexed $\mu$ in the sum for $x_m$, i.e., $\nu_j =   \mu_{j+q_md_m , j+q_m(d_m-1)}.$ (But this could occur up to $d_m$ times, such as for $f = (X_1X_2)^d.$) Again to solve for   $f_{j,j}$ we have a triangular linear  system in the $\nu_j$ in which the coefficients of the $\nu_j$ are monic polynomials with coefficients in the other $\mu_{i,j},$  so this system has a solution, if we are allowed to take roots in $F$ of polynomials of degree  $d$, where $d$ is the number of highest vertices of the circuit $\Gamma$.
\end{proof}

\begin{proof}[Proof of \Tref{T3b}]
Since the polynomial no longer is homogeneous, we cannot get a diagonal matrix by a standard substitution.

But we are able to get an upper triangular matrix, which is what we claim.
Consider homogeneous matrix unit substitutions $x_i,$ $1\le i <m.$ For any monomial $h=h (X_1,\dots, X_m, \dots, X_{m})$ of $f,$ let $d_{i,h}$ be the degree of $X_i$ in $h$,  $1\le i \le m,$ and let $d_h = \sum _{i=1}^{m-1} d_{i,h}.$ First assume that $d_h \ne 0$ for each  monomial~$h$.

We define
\begin{equation}
    \label{mon} \phi(h) = \frac{d_h}{d_{m,h}},
\end{equation}
(which we put as $\infty$ if $d_{m,h} =  0),$ and $\phi(f) = \max_{\text{monomials } h \text{ of }  f}\phi(h).$

Let $\bar f$ be the sum of those monomials $h$  of $f$  for which $\phi(h)= \phi(f).$ (Note that $\bar f$ need not be completely homogeneous since for example we could have $f = X_1X_3 + X_2^2 X_3^2$, in which case $\phi(h)=1$ for each monomial of $f$, and $\bar f = f$.) Nevertheless, for the standard substitution of~\Tref{T3a},
$f(x_1, \dots, x_m)$ is upper triangular, and its evaluation on the diagonal is $\bar f(x_1, \dots, x_m).$

But the eigenvalues of an upper triangular matrix are precisely its diagonal elements. Thus, to produce a desired set of eigenvalues, we can discard all the monomials $h$
with $\phi(h)< \phi(f).$
In this case,  we only get occurrences of $\nu_j$, the   highest indexed $\mu$, in the sum for $x_m$, for monomials $h$ of maximal degree. (For the other monomials we only get occurrences $\mu_{j'}$ for $j'<j.)$ Thus, we get a triangular set of equations, with the same leading coefficients in the $\nu_j,$ so we have essentially reduced to Theorem~\ref{T3a}.

Thus we may assume that there is a monomial where $X_1,\dots,X_{m-1}$ do not appear; then we can specialize $X_1,\dots,X_{m-1}$ to $0$ and have a nonzero polynomial $f(X_m)= \alpha_i X_m^i.$  In this case we even can conclude that $f$ is rich! Namely, take a diagonal matrix $\diag{\beta_i}$ and solve $f(\gamma_i) = \beta_i$ for $1\le i\le n,$ so take $x = (\gamma_i).$
\end{proof}

 \begin{rem}$ $
    \begin{itemize}
    \item     Different choices of $q_i$, and perhaps mixing positive, zero, and negative choices for the $q_i$ could provide a lower (i.e., more efficient) value of $m'.$

        \item We see in the substitution \eqref{w1}, the evaluation of the diagonal of $f$ is ``almost'' controlled by the evaluation of a homogeneous component.

        \item Our emphasis on Question~\ref{T3} is for $n\gg m$, but one naturally is interested in the lower $m' \times m'$ part of the evaluation. This is related to the L'vov -Kaplansky problem about all the possible evaluations of a polynomial on matrices. Ironically the case $m>n$ seems to be more difficult to handle, cf.~ \cite[Theorems 4,5]{KBR3} and \cite{KBR4}.
    \end{itemize}
 \end{rem}

\section{Proof of \Pref{PropA}}

This requires a fact about transcendence.

\begin{lem}\label{ind} Suppose $A =F[c_1,\dots, c_n]$ is an affine domain and $\pi: A\to B$ is a projection such that, writing $\bar a = \pi (a),$ $\bar c_n =0$ and $\bar c_1, \dots, \bar c_u$ are algebraically independent. Then $c_1, \dots,   c_u, c_n$ are algebraically independent.
 \begin{proof}
     Any equation
$$0 = \sum_{i} g_i(c_1, \dots, c_u) c_n^i$$
of minimal degree specializes to
$$0 = \sum_{i}  {g}_i(\bar{c}_1,\dots, \bar{c}_u) \bar{c}_n^i =  {g}_0(\bar{c}_1,\dots,\bar{c}_u),$$
implying $g_0 = 0 $ since $\bar{c}_1,\dots,\bar{c}_u$ are algebraically independent; then we can cancel~$c_n,$ contrary to minimality of $\deg g $. (We do not rule out the possibility $ u= 0 $, in which case the $ g_i $ would be constant.)
 \end{proof}
\end{lem}
\begin{proof}({\it of Proposition~\ref{PropA}.})
We show that the set of characteristic values $ \alpha_1, \dots, \alpha_n $ of~$ f(X_1, \dots, X_m)$, where $X_\ell = (\xi_{ij}^{\ell})$ are the generic matrices, has transcendence degree $\ge n-k$ (over $F$), for every $n \ge k$. Let $c_1 =\sum \alpha_i, \ \dots,\  c_n =\prod \alpha_i $ be the values of the elementary symmetric functions evaluated on $\alpha_1, \dots, \alpha_n$. Then each $ c_i \in F(\alpha_1,\dots,\alpha_n)$, and furthermore $c_1,\dots,c_n$ are the coefficients of the characteristic polynomial of $f(X_1,\dots,X_m)$. (In particular $c_n\ne 0$ since $ F \langle X_1, \dots X_m \rangle $  is a domain.) Thus each of $\alpha_1,\dots,\alpha_n$ is algebraic over $F(c_1,\dots,c_n)$, and thus letting $t = \trdeg F(c_1,\dots, c_n )$, we also have $t = \trdeg F(\alpha_1,\dots,\alpha_n)$.

It remains to show that $t \ge n-k$. We prove this by induction. For $n = k$ the assertion is trivial. For $n > k$ consider the specialization
$$\xi_{in}^{(\ell)} \mapsto 0, \ \xi_{nj} ^{(\ell)}\mapsto 0 ,\ \xi_{nn}^{(\ell)} \mapsto 0 \quad (i,j=1,\dots,n-1, \ \ell = 1,\dots,m).$$

Writing $\overline{\phantom{x}}$ for the image under this specialization, we can identify $\overline{X_1}, \dots, \overline{X_m}$ as generic $(n-1) \times (n-1)$ matrices, 
so $f(\overline{X_1}, \dots, \overline{X_m})$ is singular, implying $\bar{c}_n = 0$, and $\bar{c}_1,\dots,\bar{c}_{n-1}$ are the characteristic coefficients of $f(\overline{X_1}, \dots, \overline{X_m})$. By induction
$$
\trdeg \,F(\bar{c}_1,\dots,\bar{c}_{n-1}) \geq  n-1-k.$$ 

Take a transcendence base $\bar{c}_1,\dots,\bar{c}_u$ of $F(\bar{c}_1,\dots,\bar{c}_{n-1})$. Then $c_n$ is algebraically independent of $c_1, \dots, c_u$ over $F$, by Lemma~\ref{ind}. Thus
$$t \geq 1 + u \ge n-k,$$
as desired.
\end{proof}

\begin{ques}
What is the lower bound for the transcendence degree  $t$? We suspect that $t \geq n - 1$ for all $n > k$.
\end{ques}

\begin{rem}$ $
    \begin{itemize}
      \item Our original effort was to verify Question~\ref{T3} as a corollary of \Pref{PropA}, by specializing the eigenvalues, but Makar-Limanov has pointed out that even if elements are algebraically independent, one may not have freedom in their specialization, e.g., $\beta_1(\beta_1-\beta_2)$ and $\beta_2(\beta_1-\beta_2)$ cannot be specialized to the same nonzero element.

  \item \cite{BV}  has a slightly weaker version of   \Pref{PropA} for  nonconstant noncommutative rational functions.
    \end{itemize}
\end{rem}

\end{document}